\newcommand{\prob}{\stackrel{P}{\longrightarrow}}
\newcommand{\fdd}{\stackrel{f.d.d.}{\longrightarrow}}
\newcommand{\eid}{\stackrel{d}{=}}
\newcommand{\reals}{{\mathbb R}}
\newcommand{\bbr}{\reals}
\newcommand{\vep}{\varepsilon}
\newtheorem{theorem}{Theorem}[section]
\def\Cov{{\rm Cov}}
\def\Var{{\rm Var}}
\def\E{{\rm E}}
\numberwithin{equation}{section}
\begin{document}
\title[Length of stationary Gaussian excursions]
{Length of stationary Gaussian excursions}

\author{Arijit Chakrabarty}
\address{Theoretical Statistics and Mathematics Unit \\
Indian Statistical Institute \\
203 B.T. Road\\
Kolkata 700108, India}
\email{arijit.isi@gmail.com}

\author{Manish Pandey}
\address{Department of Mathematics and Computer Science, Eindhoven University of Technology\\}
\email{manishpandey0897@gmail.com}

\author{Sukrit Chakraborty}
\address{Theoretical Statistics and Mathematics Unit \\
Indian Statistical Institute \\
203 B.T. Road\\
Kolkata 700108, India}
\email{sukrit049@gmail.com}

\subjclass{Primary 60G15.  Secondary 60G70.}
\keywords{ Gaussian process,  high excursions, length of excursion,  regular variation
\vspace{.5ex}}

\begin{abstract}
Given that a stationary Gaussian process is above a high threshold, the length of time it spends before going below that threshold is studied.  The asymptotic order is determined by the smoothness of the sample paths, which in turn is a function of the tails of the spectral measure.  Two disjoint regimes are studied - one in which the second spectral moment is finite and the other in which the tails of the spectral measure are regularly varying and the second moment is infinite.
\end{abstract}

\maketitle

\section{Introduction} 

Gaussian processes have been studied from various points of view, including from the angle of extremal behaviour.  A list,  which is incomplete by all means, of works on supremum and infimum of Gaussian processes include \cite{dudley:1973},  \cite{talagrand:1987},  \cite{berman:kono:1989}, besides the books \cite{adler:taylor:2007} and \cite{azais:wschebor:2009}.  Examples of recent works on this topic are \cite{adler:moldavskaya:samorodnitsky:2014},  \cite{chakrabarty:samorodnitsky:2017},  \cite{wu:chakrabarty:samorodnitsky:2019},  \cite{selk:2021} among others.  

In this short note,  we study the length of excursion sets of a stationary Gaussian process above a high threshold.  Consider a stationary zero mean Gaussian process $(X_t:t\in\bbr)$ with continuous sample paths, the exact assumptions on which are mentioned in Section \ref{sec1}.  Given that at some point, which we take as the origin, the process is above a high threshold $u$,  the excursion set is defined as the connected component of the random set $\{t\in\bbr:X_t>u\}$ containing the origin.  The asymptotic order of the excursion set, given that $X_0>u$,  for large $u$, depends on the smoothness of the sample paths of $(X_t)$. Potential applications of the results proved herein are in statistical analyses in situations where data is available only above a very high threshold. In such situations, smoothness of the paths can be inferred from the lengths of the excursion sets.

The smoothness of the sample paths, in turn,  is determined by the tails of the spectral measure of the covariance kernel underlying $(X_t)$.  We study two disjoint regimes.  In Section \ref{sec.c2}, the spectral measure is considered to have a finite second moment.  In this regime, the paths of $(X_t)$ are twice continuously differentiable, or $C^2$ as we usually say.  In Section \ref{sec.ht}, the spectral measure is assumed to have regularly varying tails with index $-\alpha$ for some $0<\alpha<2$, which automatically ensures that the second spectral moment is infinite.  The analysis of the 2 regimes proceeds along very different lines.  In the $C^2$ regime,  differential calculus is used for studying the length of the excursions.  Theorem \ref{sec.c2.t1} shows the excursion set above $u$ is asymptotically of the order $u^{-1}$.  The study in Section \ref{sec.ht} proceeds by scaling time and space at an appropriate level with the help of a result of \cite{pitman:1968}.  It is shown in Theorem \ref{sec.ht.main} that the asymptotic order is approximately $u^{-2/\alpha}$, which is much smaller than that in the $C^2$ regime.  This is not surprising in view of the fact that the sample paths are less smooth when the spectral measure has regularly varying tails.  The results are applied to an example in Section \ref{sec.ex}.

\section{The assumptions}\label{sec1}
Let $\mu$ be a finite non-null symmetric measure on $\bbr$, that is,  
\[
0<\mu(\bbr)<\infty\,,
\]
and for every Borel set $A\subset\bbr$,
\[
\mu(-A)=\mu(A)\,,
\]
where $-A=\{x\in\bbr:-x\in A\}$.  Define
\begin{equation}
\label{eq.defR}R(t)=\int_{-\infty}^\infty \mu(dx)\,e^{itx}\,,t\in\bbr\,,
\end{equation}
where $i=\sqrt{-1}$, the right hand side being a real number due to symmetry of $\mu(\cdot)$.  The following assumption is made throughout the paper.  For some $\vep>0$,
\begin{equation}
\label{sec1.eq1} R(t)=R(0)+O\left(|t|^\vep\right)\,,t\in\bbr\,.
\end{equation}

Let $(X_t:t\in\bbr)$ be a zero mean Gaussian process with
\[
\E\left(X_sX_t\right)=R(s-t)\,,s,t\in\bbr\,,
\]
\eqref{eq.defR} ensuring the existence of such a process.  Furthermore, $(X_t)$ can be chosen to have continuous paths due to \eqref{sec1.eq1}.  Without loss of generality, $(X_t:t\in\bbr)$ is thus a zero mean stationary Gaussian process with covariance kernel $R(\cdot)$ and continuous paths.

The goal of this short note is studying the length of the excursion set above a high threshold $u$, given that the process exceeds $u$ at some point, which we assume to be the origin.  That is, for all $u>0$, setting
\begin{align}
\label{sec1.eq2}\tau_u^+&=\inf\{t\ge0:X_t\le u\}\,,\\
\label{sec1.eq3}\tau_u^-&=\sup\{t\le0:X_t\le u\}\,,
\end{align}
we are interested in understanding the asymptotic distribution of $\tau_u^+-\tau_u^-$, after suitable scaling,  as $u\to\infty$, conditionally on $[X_0>u]$.  

\section{Finite second spectral moment}\label{sec.c2}
In this section, the case when
\begin{equation}
\label{sec.c2.eq1}\int_{-\infty}^\infty\mu(dx)\,x^2<\infty\,,
\end{equation}
is studied.  We also assume that
\begin{equation}
\label{sec.c2.eq2}\mu\left(\bbr\setminus\{0\}\right)>0\,,
\end{equation}
for the sake of ruling out the degenerate case when $X_t=X_0$ for all $t\in\bbr$ a.s.  An immediate consequence of \eqref{sec.c2.eq1} is that $R$ is a $C^2$ function, and hence so are the sample paths of $(X_t)$.  This in conjunction with finiteness and symmetry of $\mu$  and \eqref{sec.c2.eq2} implies that 
\[
R'(0)=0>R''(0)\,,
\]
where $R'$ and $R''$ are the first and second derivatives of $R$, respectively.  The main result of this section is the following.

\begin{theorem}\label{sec.c2.t1}
As $u\to\infty$,  conditionally on $[X_0>u]$,
\[
u\left(\tau_u^+-\tau_u^-\right)\Rightarrow2\frac{{R(0)}}{\sqrt{-R''(0)}}\sqrt{Z^2+2T^*}\,,
\]
where $Z$ and $T^*$ follow standard normal and standard exponential,  respectively, independently of each other.
\end{theorem}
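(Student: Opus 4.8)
The plan is to reduce the problem to a random parabola by conditioning on the value and the (mean-square) derivative of the process at the origin and then rescaling time by a factor $u$. Write $\dot X_0$ for the mean-square derivative of $(X_t)$ at $0$; since $R\in C^2$ and $R'(0)=0$, $\dot X_0$ is a centred Gaussian variable with $\Var(\dot X_0)=-R''(0)$ that is \emph{independent} of $X_0\sim N(0,R(0))$. First I would record the two scaling limits that drive everything. By the Gaussian tail (Mills ratio) estimate, under the conditional law $\E(\,\cdot\mid X_0>u)$ one has $\tfrac{u}{R(0)}(X_0-u)\Rightarrow T^*$ with $T^*$ standard exponential; and because $\dot X_0$ is independent of $X_0$, its law is unaffected by the conditioning, so $\dot X_0/\sqrt{-R''(0)}\Rightarrow Z$ standard normal, with $Z$ and $T^*$ independent in the limit.

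Next I would rescale time and analyse $Z_u(s):=u\bigl(X_{s/u}-u\bigr)$ on compact $s$-intervals, under $\E(\,\cdot\mid X_0>u)$. Decompose $X_t=m(t)+Y_t$, where $m(t)=\E(X_t\mid X_0,\dot X_0)=\tfrac{R(t)}{R(0)}X_0+\tfrac{R'(t)}{R''(0)}\dot X_0$ is the $L^2$-regression (using $\Cov(X_t,\dot X_0)=-R'(t)$ and the orthogonality of $X_0,\dot X_0$), and $Y_t$ is a centred Gaussian process whose covariance is deterministic. Using $R(t)=R(0)+\tfrac12 R''(0)t^2+o(t^2)$ and $R'(t)=R''(0)t+o(t)$ together with the two limits above, a direct expansion gives
\[
u\bigl(m(s/u)-u\bigr)\Rightarrow g(s):=R(0)T^*+\sqrt{-R''(0)}\,Z\,s+\tfrac{R''(0)}{2R(0)}s^2,
\]
a strictly concave parabola. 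For the fluctuation, $m$ is built to match $X$ at the origin, $m(0)=X_0$ and $\dot m(0)=\dot X_0$, so the remainder satisfies $Y_0=\dot Y_0=0$ with $\dot Y$ continuous (the paths being $C^1$); hence $\sup_{|s|\le M}\lvert uY_{s/u}\rvert\le M\sup_{|r|\le M/u}\lvert\dot Y_r\rvert\to0$ almost surely. Thus $Z_u\Rightarrow g$ uniformly on compacts.

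Finally I would pass from path convergence to the excursion length. Almost surely $g(0)=R(0)T^*>0$, so $g$ has two simple roots $s_-<0<s_+$ with $\{g>0\}=(s_-,s_+)$, and solving $g=0$ gives $s_+-s_-=2\tfrac{R(0)}{\sqrt{-R''(0)}}\sqrt{Z^2+2T^*}$. Because $Z_u\Rightarrow g$ uniformly on compacts and $g$ crosses $0$ transversally at $s_\pm$, with high probability $Z_u>0$ throughout $(0,s_+)$ and $Z_u<0$ just beyond $s_\pm$; hence the first downcrossing to the right and the last upcrossing to the left of the rescaled path track $s_+$ and $s_-$, i.e. $u\tau_u^+\Rightarrow s_+$ and $u\tau_u^-\Rightarrow s_-$. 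Subtracting yields the claimed limit for $u(\tau_u^+-\tau_u^-)$.

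The hardest part is the last transfer, from weak convergence of $Z_u$ to convergence of the crossing times $u\tau_u^\pm$. This requires a localization argument guaranteeing that the excursion of $X$ containing the origin is exactly the microscopic one: one must rule out that $X$ re-exceeds $u$ immediately outside $[\,s_-/u,\,s_+/u\,]$ and that any macroscopic crossing intervenes (here one uses that $X_t=O(1)$ is far below $u$ at times of order $1$), and one must upgrade the transversality of the limiting crossings at $s_\pm$ to genuine crossings of $X_t$ near $\tau_u^\pm$. Making the ``conditional on $X_0>u$'' convergence precise, and verifying the continuity of $\dot Y$ down to the scale $1/u$ via the regularity in \eqref{sec1.eq1}, are the remaining technical points.
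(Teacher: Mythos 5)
Your proof is correct, but it takes a genuinely different route from the paper's. The paper argues pathwise: having first shown $\tau_u^\pm\prob0$ conditionally on $[X_0>u]$, it applies Taylor's theorem with Lagrange remainder at $\tau_u^+$, so that $\tau_u^+$ solves the quadratic $u=X_0+\tau_u^+X_0'+\frac12\left(\tau_u^+\right)^2X''_{\xi_u}$, and then passes to the limit in the explicit root, using $u(X_0-u)\Rightarrow R(0)T^*$, $u^{-1}X''_{\xi_u}\prob R''(0)/R(0)$, and the independence of $X_0'$ from $X_0$. You instead prove a functional limit theorem: after regressing on $(X_0,\dot X_0)$ and rescaling time by $1/u$, the path $s\mapsto u\left(X_{s/u}-u\right)$ converges weakly, uniformly on compacts, to the random concave parabola $g$, and the hitting times are then recovered by the continuous mapping theorem. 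This is exactly the strategy the paper itself uses in the heavy-tailed regime (Theorems \ref{sec.ht.t1} and \ref{sec.ht.main}, with $g$ replaced by the process of \eqref{eq.defY}), so your argument in effect unifies the two regimes; it also requires only $C^1$ paths (your mean-value bound on the residual), whereas the paper's quadratic-formula argument uses pathwise $C^2$ regularity. What the paper's route buys is elementarity: no process-level weak convergence or tightness considerations, only Taylor expansion and Slutsky-type steps, yielding the joint limits of $u\tau_u^\pm$ directly. Two remarks. First, the ``hardest part'' you flag at the end is a non-issue: $u\tau_u^\pm$ are \emph{exactly} the hitting functionals $\tau^\pm$ evaluated at the rescaled path (they are first-passage times by definition, so no re-exceedance or macroscopic-excursion question can arise), and the transfer is precisely the continuous mapping theorem at paths where the limit crosses zero transversally, which holds almost surely for $g$ since $T^*>0$ -- the same argument as in the paper's proof of Theorem \ref{sec.ht.main}. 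Second, two routine points should be made explicit: the residual $Y$ is independent of $(X_0,\dot X_0)$ by joint Gaussianity, so conditioning on $[X_0>u]$ leaves its law unchanged (without this independence your unconditional a.s.\ bound on the remainder could not be used under the conditional law), and the pathwise derivative of $X$ at $0$ coincides a.s.\ with the mean-square derivative $\dot X_0$, so that indeed $\dot Y_0=0$.
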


\begin{proof}
All the convergences that we shall talk of in the proof, unless mentioned otherwise,  are conditionally given that $[X_0>u]$ as $u\to\infty$.  Our first claim is that
\begin{equation}
\label{sec.c2.t1.eq1} \tau_u^\pm\prob0\,,
\end{equation}
that is,  $\tau_u^+$ and $\tau_u^-$ go to zero in probability.  Proceeding towards proving this,  first note that paths of $(X_t:t\in\bbr)$ are $C^2$ because its covariance kernel $R$ is.  Fix $\vep>0$ and use Taylor's theorem to write
\begin{equation}
\label{sec.c2.t1.eq2}X_\vep=X_0+\vep X_0'+\frac12\vep^2X_\xi''\,,
\end{equation}
for some $\xi\in[0,\vep]$,  where $X'$ and $X''$ are the first and second pathwise derivatives of $X$, respectively.  Set
\begin{equation}
\label{sec.c2.t1.eq3}Y_t=X_t-\E\left(X_t|X_0\right)=X_t-\frac{R(t)}{R(0)}X_0\,, t\in\bbr\,,
\end{equation}
as a consequence of which,  \eqref{sec.c2.t1.eq2} can be rewritten as
\[
X_\vep=X_0+\vep X_0'+\frac12\vep^2\left(Y''_\xi+R(0)^{-1}R''(\xi)X_0\right)\,.
\]
Using furthermore that $R'(0)=0$,  it follows that $X_0'=Y_0'$ and hence
\[
X_\vep=X_0+\vep Y_0'+\frac12\vep^2\left(Y''_\xi+R(0)^{-1}R''(\xi)X_0\right)\,.
\]
It is known that
\begin{equation}
\label{sec.ht.t1.eq4}P\left(u(X_0-u)\in\cdot\bigr|X_0>u\right)\Rightarrow P\left(R(0)T^*\in\cdot\right)\,,u\to\infty\,,
\end{equation}
and hence
\begin{equation}
\label{sec.c2.t1.eq5}u^{-1}X_0\prob1\,.
\end{equation}
Combining this with the observation that $(Y_t:t\in\bbr)$ is independent of $X_0$, and that 
\[
\sup_{0\le t\le\vep}|Y''_t|<\infty\text{ a.s.}\,,
\]
it follows that 
\[
\lim_{u\to\infty}P\left(u^{-1}X_\vep>1+\frac12\vep^2\frac1{R(0)}\sup_{0\le t\le\vep}R''(t)\Bigr|X_0>u\right)=0\,.
\]
Since for small enough $\vep>0$ it holds that
\[
\sup_{0\le t\le\vep}R''(t)<0\,,
\]
because $R''(0)<0$ and $R$ is $C^2$,  it holds that for such $\vep$,
\[
\lim_{u\to\infty}P\left(X_\vep>u|X_0>u\right)=0\,,
\]
and hence \eqref{sec.c2.t1.eq1} holds for $\tau_u^+$.  A similar argument works for $\tau^-_u$.

Replacing $\vep$ by $\tau_u^+$ in \eqref{sec.c2.t1.eq2} yields that
\[
u=X_{\tau_u^+}=X_0+\tau_u^+X_0'+\frac12\left(\tau_u^+\right)^2X''_{\xi_u}\,,
\]
for some $\xi_u\in[0,\tau_u^+]$.
The 2 roots of the above quadratic equation in $\tau_u^+$ are 
\begin{equation}
\label{sec.c2.t1.eq6}\frac1{X''_{\xi_u}}\left(\pm \sqrt{(X_0')^2-2X''_{\xi_u}(X_0-u)}-X_0'\right)\,.
\end{equation}
Noting that
\[
X''_{\xi_u}=Y''_{\xi_u}+R(0)^{-1}R''(\xi_u)X_0\,,
\]
\eqref{sec.c2.t1.eq1} and \eqref{sec.c2.t1.eq5} imply that $\xi_u\prob0$.  Thus,
\begin{equation}
\label{sec.c2.t1.eq7}u^{-1}X''_{\xi_u}\prob\frac{R''(0)}{R(0)}<0\,,
\end{equation}
a consequence of which is that
\[
\lim_{u\to\infty}P\left(X''_{\xi_u}<0|X_0>u\right)=1\,.
\]
Since on the set $[X_0>u]\cap[X''_{\xi_u}<0]$,  it holds that
\[
\sqrt{(X_0')^2-2X''_{\xi_u}(X_0-u)}>|X_0'|\,,
\]
and $\tau_u^+$ is the positive one of the two roots in \eqref{sec.c2.t1.eq6},  it follows that
\[
\lim_{u\to\infty}P\left(\tau_u^+=-\frac1{X''_{\xi_u}}\left( \sqrt{(X_0')^2 - 2X''_{\xi_u}(X_0-u)} + X_0'\right)\Biggr|X_0>u\right)=1\,.
\]
Using \eqref{sec.ht.t1.eq4} and \eqref{sec.c2.t1.eq7},  we get
\[
X''_{\xi_u}(X_0-u)\Rightarrow R''(0)T^*\,.
\]
Combining this with the fact that $X_0'$, which equals $Y_0'$,  is independent of $X_0$, it follows that
\[
\sqrt{(X_0')^2 - 2X''_{\xi_u}(X_0-u)} + X_0' \Rightarrow \sqrt{(X_0')^2 -2 R''(0)T^*}+X_0'\,,
\]
assuming $T^*$ is independent of $(X_t:t\in\bbr)$ which is no loss of generality.  Use \eqref{sec.c2.t1.eq7} once again to get that
\[
u\tau_u^+\Rightarrow \frac{R(0)}{-R''(0)}\left(X_0'+ \sqrt{(X_0')^2 -2 R''(0)T^*} \right)\,.
\]
Recalling that for $h\neq0$,
\[
\frac{X_h-X_0}{h}\sim N\left(0,2h^{-2}(R(0)-R(h))\right)\,,
\]
and that the left hand side converges to $X_0'$ as $h\to0$,  it follows that
\[
X_0'\sim N\left(0,-R''(0)\right)\,.
\]
Thus,
\begin{align*}
u\tau_u^+ &\Rightarrow \frac{R(0)}{-R''(0)}\left(Z\sqrt{-R''(0)}+ \sqrt{-R''(0)Z^2 -2 R''(0)T^*} \right)\\
&= \frac{R(0)}{\sqrt{-R''(0)}}\left(Z+\sqrt{Z^2+2T^*}\right)\,,
\end{align*}
because $Z$ is independent of $T^*$.  The same calculations with no extra effort would show that
\[
u\tau_u^-\Rightarrow \frac{R(0)}{\sqrt{-R''(0)}}\left(Z-\sqrt{Z^2+2T^*}\right)\,,
\]
jointly with the above, from which the proof follows.
\end{proof}

\section{Spectral measure with regularly varying tails and infinite second moment}\label{sec.ht}
The assumption on the spectral measure $\mu$ in this section is that $\mu\bigl([\cdot,\infty)\bigr)$ is regularly varying with index $-\alpha$ for some $0<\alpha<2$,  that is,
\[
\mu\bigl([x,\infty)\bigr) > 0\,,\text{ for all }x>0\,,
\]
and
\begin{equation}
\label{sec.ht.eq2} \lim_{x\to\infty}\frac{\mu\bigl([cx,\infty)\bigr)}{\mu\bigl([x,\infty)\bigr)}=c^{-\alpha}\,,\text{ for all }c>0\,.
\end{equation}
This necessarily ensures that \eqref{sec.c2.eq1} fails, that is, the left hand side therein is infinite.  

Theorem 1 of \cite{pitman:1968} implies that 
\begin{equation}
\label{sec.ht.eq3} R(0) - R(t) \sim C_\alpha\mu\left(\left[t^{-1},\infty\right)\right)\,,\text{ as }t\downarrow0\,,
\end{equation}
that is,  the ratio of the quantities on the two sides of `$\sim$' goes to $1$ as $t\downarrow0$,  where 
\begin{equation}
\label{eq.defcalpha}C_\alpha= \frac{\pi}{\Gamma(\alpha)\sin(\pi\alpha/2)}\,,0<\alpha<2\,,
\end{equation}
and $\Gamma(\cdot)$ is Euler's Gamma function.  This means that \eqref{sec1.eq1} holds for any $\vep\in(0,\alpha)$.  Set
\[
h(y)=\inf\left\{x\ge0:\mu\left([x,\infty)\right)\le y\right\}\,,y>0\,,
\]
and
\[
\delta_u=\left(h(u^{-2})\right)^{-1}\,,u>0\,.
\]

For stating the main result of this section, which is Theorem \ref{sec.ht.main} below,  a few more notations are needed.  Let
\begin{equation}
\label{eq.defY}Y_t=\sqrt{2C_\alpha}B(t)+R(0)T^*-\frac1{R(0)}C_\alpha |t|^\alpha\,,t\in\bbr\,,
\end{equation}
which should not be confused with $Y_t$ defined in \eqref{sec.c2.t1.eq3},  where $T^*$ follows standard exponential,  $(B(t):t\in\bbr)$ is a fractional Brownian motion independent of $T^*$ with Hurst index $\alpha/2$, that is, it is a zero mean Gaussian process with continuous paths and covariance as follows:
\[
\Cov\left(B(s),B(t)\right)=\frac12\left(|t|^\alpha+|s|^\alpha-|t-s|^\alpha\right)\,,s,t\in\bbr\,.
\]
Set
\begin{align}
\label{eq.deftaup}\tau_*^+&=\inf\left\{t\ge0:Y_t\le0\right\}\,,\\
\label{eq.deftaum}\tau_*^-&=\sup\left\{t<0:Y_t\le0\right\}\,.
\end{align}

\begin{theorem}\label{sec.ht.main}
If $\tau_u^\pm$ are as in \eqref{sec1.eq2} and \eqref{sec1.eq3}, then 
\[
\delta_u^{-1}\left(\tau_u^+-\tau_u^-\right)\Rightarrow\tau_*^+-\tau_*^-\,,
\]
conditionally on $[X_0>u]$, as $u\to\infty$.
\end{theorem}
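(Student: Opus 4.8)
The plan is to rescale time by $\delta_u$ and space by $u$, show that the resulting field converges functionally (conditionally on $[X_0>u]$) to the process $Y$ of \eqref{eq.defY}, and then read off the excursion endpoints by a continuous-mapping argument. Concretely, I would set $V_u(s)=u\left(X_{\delta_u s}-u\right)$ for $s\in\bbr$, so that by \eqref{sec1.eq2}--\eqref{sec1.eq3} one has $\delta_u^{-1}\tau_u^+=\inf\{s\ge0:V_u(s)\le0\}$ and $\delta_u^{-1}\tau_u^-=\sup\{s<0:V_u(s)\le0\}$. The proof then splits into (a) showing $V_u\Rightarrow Y$ in $C(\bbr)$ with the topology of uniform convergence on compacts, conditionally on $[X_0>u]$, and (b) transferring this to the joint convergence of the pair of crossing times.

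For step (a) I would reuse the orthogonal decomposition behind \eqref{sec.c2.t1.eq3}, writing $X_t=\frac{R(t)}{R(0)}X_0+W_t$ with $W_t=X_t-\E(X_t\mid X_0)$ a zero-mean Gaussian process independent of $X_0$ (here renamed $W$ to avoid the clash with the limit process $Y$). Setting $G_u=u(X_0-u)$ and substituting $X_0=u+u^{-1}G_u$ gives the exact identity
\[
V_u(s)=-\frac{u^2\left(R(0)-R(\delta_u s)\right)}{R(0)}+\frac{R(\delta_u s)}{R(0)}\,G_u+u\,W_{\delta_u s}\,,
\]
whose three summands I would treat separately. The first is deterministic and, by \eqref{sec.ht.eq3}, the regular variation \eqref{sec.ht.eq2}, and the defining relation $\mu([\delta_u^{-1},\infty))\sim u^{-2}$, converges uniformly on compacts to $-\frac{C_\alpha}{R(0)}|s|^\alpha$. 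In the second, $\frac{R(\delta_u s)}{R(0)}\to1$ uniformly on compacts while $G_u\Rightarrow R(0)T^*$ by \eqref{sec.ht.t1.eq4}, so this term converges to $R(0)T^*$. In the third, a covariance computation using \eqref{sec.ht.eq3} and \eqref{sec.ht.eq2} gives $\Var(uW_{\delta_u s})\to2C_\alpha|s|^\alpha$ and, more generally, convergence of the covariances of $uW_{\delta_u\cdot}$ to those of $\sqrt{2C_\alpha}B(\cdot)$, yielding finite-dimensional convergence to the fractional-Brownian-motion term. Since $W$ is independent of $X_0$, these limits combine into $V_u\Rightarrow Y$ once tightness is in hand.

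Tightness of $uW_{\delta_u\cdot}$ is the first technical hurdle. As the process is Gaussian, it suffices to obtain a uniform-in-$u$ increment bound $\E\big(uW_{\delta_u s}-uW_{\delta_u s'}\big)^2\le C_K|s-s'|^\alpha$ for $s,s'\in[-K,K]$; then Gaussian hypercontractivity $\E|\cdot|^{2m}\le c_m(\E|\cdot|^2)^m$ together with Kolmogorov--Chentsov (choosing $m$ with $m\alpha>1$) gives tightness in $C[-K,K]$. The required bound should follow from \eqref{sec.ht.eq3} and the uniform Potter-type control available for the regularly varying tail $\mu([\,\cdot\,,\infty))$; making this uniformity precise is the first place care is needed.

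The second and, I expect, the main obstacle is step (b): the first-passage and last-exit functionals are not continuous on all of $C(\bbr)$, so $V_u\Rightarrow Y$ does not immediately force the crossing times to converge. By Skorokhod representation I would realize the convergence almost surely, uniformly on compacts. One checks first that $Y_0=R(0)T^*>0$ a.s.\ and that $Y_t\to-\infty$ as $|t|\to\infty$ a.s., since the drift $-\frac{C_\alpha}{R(0)}|t|^\alpha$ dominates the fractional Brownian part (which grows only like $|t|^{\alpha/2}$); hence $\tau_*^\pm$ are finite a.s.\ and $Y>0$ on $(\tau_*^-,\tau_*^+)$. The delicate point is that $Y$ crosses cleanly, namely that for every $\epsilon>0$ the process $Y$ takes strictly negative values in $(\tau_*^+,\tau_*^++\epsilon)$ and in $(\tau_*^--\epsilon,\tau_*^-)$ a.s. This \emph{immediate down-crossing} rules out the degenerate scenario in which $Y$ merely touches $0$ at $\tau_*^\pm$ and returns, and it is exactly what makes the crossing times of $V_u$ converge to those of $Y$ under uniform approximation. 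I would establish it from the local oscillation of fractional Brownian motion (a local law of the iterated logarithm), which guarantees that at the hitting level the continuous Gaussian part of $Y$ fluctuates on both sides of its attained value, so $Y$ cannot stay nonnegative on any one-sided neighbourhood of $\tau_*^\pm$. Granting this, the continuous-mapping argument delivers joint convergence of the endpoints, and subtracting yields $\delta_u^{-1}(\tau_u^+-\tau_u^-)\Rightarrow\tau_*^+-\tau_*^-$, completing the proof.
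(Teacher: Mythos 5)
Your overall route coincides with the paper's: the paper isolates your step (a) as a separate result (Theorem \ref{sec.ht.t1}), proved with exactly your decomposition $X_t=\tfrac{R(t)}{R(0)}X_0+W_t$, the same three-term analysis of $u(X_{t\delta_u}-u)$, and tightness via a Potter-type bound for the regularly varying tail combined with Gaussian moment estimates; Theorem \ref{sec.ht.main} is then deduced by continuous mapping, as in your step (b). One quantitative remark on (a): the uniform-in-$u$ increment bound cannot be taken with exponent $\alpha$, since Potter bounds necessarily lose something; what one gets (and what the paper uses, with moment $\beta=8/\alpha$) is a bound of the form $C_K|s-s'|^{\alpha/2}$ valid for all $u\ge u_0$, which still suffices for the Kolmogorov--Chentsov criterion after taking a high enough moment. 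This is the "care" you flagged, and it is harmless.

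The genuine gap is in your justification of the clean-crossing property in step (b). The local law of the iterated logarithm for fractional Brownian motion is an almost-sure statement at a \emph{fixed} time, hence at countably many times simultaneously; it cannot be invoked at the random time $\tau_*^+$. In fact, the assertion that the Gaussian part ``fluctuates on both sides of its attained value'' at every time point is false: continuous Gaussian paths have (densely many) local minima, at which the path stays above its attained value on both sides, and a first-passage time is precisely the kind of random time that could a priori coincide with such a point (hitting the level of a local minimum). What rescues the claim is distributional rather than pathwise: one must show that the level $0$ is a.s.\ not a local-minimum value of $Y$. Here the independent exponential $T^*$ does the work. The set of local-minimum values of a continuous function is countable, each such value being $\inf_{[p,q]}Y$ for some rational interval $[p,q]$; and for a fixed interval, $\inf_{[p,q]}Y=R(0)T^*+\inf_{[p,q]}\bigl(\sqrt{2C_\alpha}\,B(t)-\tfrac{C_\alpha}{R(0)}|t|^\alpha\bigr)$ has a continuous distribution because $T^*$ is independent of $(B(t))$ and has a density, so $P\bigl(\inf_{[p,q]}Y=0\bigr)=0$. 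Summing over rational intervals, a.s.\ no local minimum of $Y$ sits at level $0$; since a failure of immediate down-crossing at $\tau_*^\pm$ would produce exactly such a local minimum (as $Y>0$ strictly between $\tau_*^-$ and $\tau_*^+$), clean crossing follows. Combined with your (correct) finiteness argument for $\tau_*^\pm$, this yields the a.s.\ continuity of the passage-time functionals at paths of $Y$, and the rest of your continuous-mapping argument goes through. For what it is worth, the paper merely asserts this continuity without proof, so your instinct that it is the delicate point is right; but the LIL argument you propose does not establish it, and the repair above (or some equivalent use of the independence and continuous law of $T^*$) is needed.
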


It follows from the definition of $h$ and \eqref{sec.ht.eq2} that 
\[
\delta_u=u^{-2/\alpha}l(u)\,,u>0\,,
\]
for some  function $l$ slowly varying at infinity.  This means that under the hypotheses of Theorem \ref{sec.ht.main},  the length of the excursion set above $u$ is roughly of order $u^{-2/\alpha}$.  This is much smaller than $u^{-1}$,  which is the order of the length of the excursion set under the assumptions of Theorem \ref{sec.c2.t1}. 

Theorem \ref{sec.ht.main} would follow almost immediately from the following result. 

\begin{theorem}\label{sec.ht.t1}
Conditional on $[X_0>u]$, as $u\to\infty$,
\[
\bigl(u\left(X_{t\delta_u}-u\right):t\in\bbr\bigr)\Rightarrow\left(Y_t:t\in\bbr\right)\,,
\]
on $C(\bbr)$ equipped with the topology of uniform convergence on compact sets.
\end{theorem}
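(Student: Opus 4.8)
The plan is to condition on $X_0$ and split the rescaled process into a deterministic drift, a random level governed by $X_0$, and an independent Gaussian fluctuation. Writing $H_t = X_t - R(t)R(0)^{-1}X_0$, the process $(H_t)$ is a zero mean Gaussian process independent of $X_0$, and with $W_u = u(X_0-u)$ one has the exact decomposition
\[
u\left(X_{t\delta_u}-u\right) = u^2\left(\frac{R(t\delta_u)}{R(0)}-1\right) + \frac{R(t\delta_u)}{R(0)}\,W_u + u H_{t\delta_u}\,.
\]
Since conditioning on $[X_0>u]$ does not alter the law of $(H_t)$, which is independent of $X_0$, the three summands can be analysed largely separately: the first is deterministic, the second is a deterministic multiple of $W_u$, and the third is an independent Gaussian process.

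First I would identify the two non-Gaussian pieces. By \eqref{sec.ht.eq3}, the regular variation of the tail, and the asymptotic-inverse property $\mu\bigl([h(u^{-2}),\infty)\bigr)\sim u^{-2}$, one gets for fixed $t\neq0$ that
\[
u^2\left(R(0)-R(t\delta_u)\right) \sim C_\alpha\,u^2\,\mu\bigl([|t|^{-1}\delta_u^{-1},\infty)\bigr)\longrightarrow C_\alpha|t|^\alpha\,,
\]
so the first summand converges to $-R(0)^{-1}C_\alpha|t|^\alpha$, while $R(t\delta_u)/R(0)\to1$ since $t\delta_u\to0$ and $R$ is continuous. For the second summand, \eqref{sec.ht.t1.eq4} gives $W_u\Rightarrow R(0)T^*$ conditionally on $[X_0>u]$, hence $R(t\delta_u)R(0)^{-1}W_u\Rightarrow R(0)T^*$. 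To upgrade these pointwise statements to local uniformity in $t$, as the $C(\bbr)$ limit demands, I would invoke the uniform convergence theorem for regularly varying functions on compact subsets of $(0,\infty)$ and control the neighbourhood of $t=0$ by Potter's bounds, matching the vanishing of $|t|^\alpha$ there.

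Next comes the Gaussian fluctuation $G_u(t) = uH_{t\delta_u}$. A direct computation gives $\Cov(H_s,H_t)=R(s-t)-R(s)R(t)R(0)^{-1}$, whence, expanding $R(r\delta_u)=R(0)-(R(0)-R(r\delta_u))$ and using the limit above,
\[
u^2\Cov\left(H_{s\delta_u},H_{t\delta_u}\right)\longrightarrow C_\alpha\left(|s|^\alpha+|t|^\alpha-|s-t|^\alpha\right)\,,
\]
which is exactly the covariance of $\sqrt{2C_\alpha}B$; this yields convergence of the finite dimensional distributions of $G_u$ to $\sqrt{2C_\alpha}B$. For tightness on each $C[-M,M]$ I would use that $(G_u)$ is Gaussian, so it suffices to bound the increment variance: from $u^2\Var(H_{s\delta_u}-H_{t\delta_u}) = 2u^2(R(0)-R((s-t)\delta_u)) - R(0)^{-1}u^2(R(s\delta_u)-R(t\delta_u))^2$ together with a Potter bound on $R(0)-R(\cdot)$ one obtains, uniformly in large $u$, an estimate of the form $u^2\Var(H_{s\delta_u}-H_{t\delta_u})\le C|s-t|^{\alpha-\eta}$ on compacts. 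Gaussianity then converts this into $\E|G_u(s)-G_u(t)|^p\le C_p|s-t|^{p(\alpha-\eta)/2}$, and choosing $p$ large enough that $p(\alpha-\eta)/2>1$ gives tightness by the Kolmogorov criterion.

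Finally I would combine the three parts. The drift converges to a deterministic function, $R(t\delta_u)R(0)^{-1}W_u$ converges uniformly in $t$ to the constant-in-$t$ process $R(0)T^*$ jointly with $G_u\Rightarrow\sqrt{2C_\alpha}B$, and $G_u$ is independent of $W_u$ while $T^*$ is independent of $B$ in the limit; adding them produces the process $Y_t$ of \eqref{eq.defY}. Since convergence in $C[-M,M]$ for every $M$ is equivalent to convergence in $C(\bbr)$ under the topology of uniform convergence on compacts, this completes the proof. The main obstacle is the tightness step: extracting from the pointwise Pitman asymptotic \eqref{sec.ht.eq3} a bound on $R(0)-R(\cdot)$ that is uniform both in $u$ and over the compact range of the increment $s-t$, including near $0$, for which Potter's bounds for the regularly varying tail $\mu\bigl([\cdot,\infty)\bigr)$ are the essential tool; the accompanying local-uniform upgrade of the drift convergence is of the same nature.
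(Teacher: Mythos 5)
Your proposal is correct and follows essentially the same route as the paper: the identical decomposition (your $H_t$ is the paper's $Z_t$, your $W_u$ its $u(X_0-u)$), the same covariance computation giving the $\sqrt{2C_\alpha}B$ finite-dimensional limit, and the same tightness argument via a Potter-type bound on $R(0)-R(\cdot)$ (the paper cites Proposition 2.6 of \cite{resnick:2007} with exponent $\alpha/2$, i.e.\ your $\eta=\alpha/2$) upgraded by Gaussianity to a Kolmogorov moment criterion. The only difference is cosmetic: you spell out the locally uniform convergence of the drift and of $R(t\delta_u)/R(0)$, which the paper leaves implicit.
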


\begin{proof}
It is known that 
\begin{equation}
\label{sec.ht.eq4}\lim_{y\downarrow0}h(y)=\infty\,,
\end{equation}
and
\begin{equation}
\label{sec.ht.eq5}\mu\left([h(y),\infty)\right)\sim y\,, y\downarrow0\,.
\end{equation}
It follows from \eqref{sec.ht.eq4} that
\[
\lim_{u\to\infty}\delta_u=0\,.
\]
Hence,  for a fixed $t\neq0$,  \eqref{sec.ht.eq3} implies that as $u\to\infty$,
\begin{align}
\nonumber \sqrt{R(0)-R(t\delta_u)} &=  \sqrt{R(0)-R(|t|\delta_u)}\\
\nonumber &\sim\left(C_\alpha\mu\left(\left[h(u^{-2})|t|^{-1},\infty\right)\right)\right)^{1/2}\\
\nonumber&\sim C_\alpha^{1/2}|t|^{\alpha/2}\sqrt{\mu\left([h(u^{-2}),\infty)\right)}\\
\label{sec.ht.t1.eq1}&\sim C_\alpha^{1/2}|t|^{\alpha/2}u^{-1}\,,
\end{align}
\eqref{sec.ht.eq5} implying the last line.  

Let
\[
Z_t=X_t-\E(X_t|X_0)\,,t\in\bbr\,.
\]
Recalling \eqref{sec.c2.t1.eq3}, it follows that for fixed $s,t\in\bbr$, 
\begin{align}\label{sec.ht.t1.eq6}
&\E\left(Z_{s\delta_u}Z_{t\delta_u}\right)\\
\nonumber&=\E\left[\left(X_{s\delta_u}-R(0)^{-1}R(s\delta_u)X_0\right)\left(X_{t\delta_u}-R(0)^{-1}R(t\delta_u)X_0\right)\right]\\
\nonumber&=R\left((s-t)\delta_u\right)-R(0)^{-1}R(s\delta_u)R(t\delta_u)\\
\nonumber&=\left[R(0)-R(s\delta_u)\right]+\left[R(0)-R(t\delta_u)\right]-\left[R(0)-R\left((s-t)\delta_u\right)\right]\\
\nonumber&\,\,\,\,\, -R(0)^{-1}\left[R(s\delta_u)-R(0)\right]\left[R(t\delta_u)-R(0)\right] \\
&\sim C_\alpha u^{-2}\left(|s|^\alpha+|t|^\alpha-|s-t|^\alpha\right)\,,\label{sec.ht.t1.eq7}
\end{align}
as $u\to\infty$ by \eqref{sec.ht.t1.eq1}.  As $(Z_t)$ is a Gaussian process, it thus follows that
\begin{equation}
\label{sec.ht.t1.eq2} \left(uZ_{t\delta_u}:t\in\bbr\right)\fdd\left(\sqrt{2C_\alpha}\,B(t):t\in\bbr\right)\,,u\to\infty\,,
\end{equation}
where `` $\fdd$ '' denotes convergence of finite-dimensional distributions.

For $t$ fixed, write
\begin{align} \nonumber
u\left(X_{t\delta_u}-u\right)&=uZ_{t\delta_u}+u\left(\frac{R(t\delta_u)}{R(0)}X_0-u\right)\\
&=uZ_{t\delta_u}+\frac{R(t\delta_u)}{R(0)}u(X_0-u)+\frac1{R(0)}u^2\left[R(t\delta_u)-R(0)\right]\,. \label{sec.ht.t1.eq3}
\end{align}
Use \eqref{sec.ht.t1.eq1} once again to note that the limit of the last term above, which is deterministic, as $u\to\infty$, is
\[
-\,\frac1{R(0)}C_\alpha|t|^\alpha\,.
\]
Recalling \eqref{sec.ht.t1.eq4} and that $(Z_t)$ is independent of $X_0$, it follows by invoking \eqref{sec.ht.t1.eq2} that conditionally on $[X_0>u]$, as $u\to\infty$,
\[
\left(u\left(X_{t\delta_u}-u\right):t\in\bbr\right)\fdd\left( \sqrt{2C_\alpha}\,B(t) + R(0)T^* -\frac1{R(0)} C_\alpha|t|^\alpha:t\in\bbr\right)\,,
\]
because  $T^*$ and $(B(t):t\in\bbr)$ are independent.  

In view of \eqref{sec.ht.t1.eq3}, \eqref{sec.ht.t1.eq4} and the independence of $X_0$ and $(Z_t)$,  for completing the proof it suffices to strengthen the convergence in \eqref{sec.ht.t1.eq2} to weak convergence on $C(\bbr)$ equipped with the topology of uniform convergence on compact sets.  As $Z_0=0$,  letting 
\[
\beta=\frac8\alpha\,,
\]
it suffices to show that given $M>0$, there exist finite positive constants $K$ and $u_0$ such that
\begin{equation}
\label{sec.ht.t1.eq5}\E\left[\left|uZ_{s\delta_u}-uZ_{t\delta_u}\right|^\beta\right]\le K(t-s)^2\,,\text{ for all }u\ge u_0\,,s,t\in[-M,M]\,.
\end{equation}
Proceeding as in \eqref{sec.ht.t1.eq6}-\eqref{sec.ht.t1.eq7},  it can be shown that for all $s,t,u$,
\begin{align}\nonumber
\Var\left(Z_{s\delta_u}-Z_{t\delta_u}\right)
&=2\left[R(0)-R((s-t)\delta_u)\right]-\frac1{R(0)}\left[R(s\delta_u)-R(t\delta_u)\right]^2\\
&\le2\left[R(0)-R((s-t)\delta_u)\right]\,.\label{sec.ht.t1.eq8}
\end{align}
Define a function $U:(0,\infty)\to(0,\infty)$ by
\[
U(x)=R(0)-R(1/x)\,,
\]
which is regularly varying with index $-\alpha$ by \eqref{sec.ht.eq2} and \eqref{sec.ht.eq3}.  Proposition 2.6 on pg 32 of \cite{resnick:2007} implies that there exists $v_0>0$ such that
\[
\sup_{v\ge v_0,x\ge1/2M}\frac{U(vx)}{U(v)}\le2x^{-\alpha/2}\,.
\]
If $u_0$ is such that 
\[
\delta_u\le\frac1{v_0}\,,\text{ for all }u\ge u_0\,,
\]
then a restatement of the above is
\[
R(0)-R\left((t-s)\delta_u\right)\le2(t-s)^{\alpha/2}\left[R(0)-R(\delta_u)\right]\,,u\ge u_0,-M\le s<t\le M\,,
\]
which is obtained by putting $v=1/\delta_u$ and $x=1/(t-s)$.

Using \eqref{sec.ht.t1.eq1}, a finite constant $K'$ can be obtained such that
\[
R(0)-R(\delta_u)\le K'u^{-2}\,,u\ge u_0\,.
\]
Thus,
\[
R(0)-R\left((t-s)\delta_u\right)\le2K'u^{-2}(t-s)^{\alpha/2}\,,u\ge u_0,-M\le s<t\le M\,.
\]
The fact that $Z_{s\delta_u}-Z_{t\delta_u}$ follows normal with mean zero and variance upper bounded by \eqref{sec.ht.t1.eq8} implies the existence of a constant $K''$ such that for $-M\le s<t\le M$ and $u\ge u_0$,
\begin{align*}
\E\left[\left|Z_{s\delta_u}-Z_{t\delta_u}\right|^\beta\right] & \le K''\left[R(0)-R((t-s)\delta_u)\right]^{\beta/2}\\
&\le K''(2K')^{\beta/2}u^{-\beta}(t-s)^{\alpha\beta/4}\\
&=Ku^{-\beta}(t-s)^2\,,
\end{align*}
where $K=K''(2K')^{\beta/2}$.  As this is equivalent to \eqref{sec.ht.t1.eq5}, the proof follows.
\end{proof}

\begin{proof}[Proof of Theorem \ref{sec.ht.main}]
Note that the function $\tau^+:C(\bbr)\to[0,\infty]$ defined by
\[
\tau^+(f)=\inf\{t\ge0:f(t)\le0\}\,,
\]
is a.s.\ finite and continuous at a sample path of $(Y_t:t\in\bbr)$,  where $C(\bbr)$ is endowed with the topology induced by uniform convergence on compact sets, and so is $\tau^-:C(\bbr)\to[0,\infty]$ defined by
\[
\tau^-(f)=\sup\{t\le0:f(t)\le0\}\,.
\]
In view of this, the proof follows from Theorem \ref{sec.ht.t1} with the help of the continuous mapping theorem.
\end{proof}

\section{Example}\label{sec.ex} In this section,  the results of Sections \ref{sec.c2} and \ref{sec.ht} are applied to an example.  Consider a zero mean stationary continuous Gaussian process $(X_t:t\in\bbr)$ with 
\[
\Cov(X_s,X_t)=e^{-|s-t|^\alpha}\,,s,t\in\bbr\,,
\]
for some $0<\alpha\le2$.   Such a process exists for all $\alpha\in(0,2]$ because 
\[
R(t)=e^{-|t|^\alpha}\,, t\in\bbr\,,
\]
is a valid characteristic function, namely of the symmetric $\alpha$-stable distribution.  It is easy to see that when $\alpha=2$,  the assumptions of Theorem \ref{sec.c2.t1} are satisfied.  A consequence of that is as $u\to\infty$, conditionally on $[X_0>u]$,
\begin{equation}
\label{sec.ex.eq1}u\left(\tau^+-\tau^-\right)\Rightarrow\sqrt{2(Z^2+2T^*)}\,,\text{ if }\alpha=2\,,
\end{equation}
where $Z$ and $T^*$ are as in the statement of Theorem \ref{sec.c2.t1}.

It is known that \eqref{sec.ht.eq2} holds when $0<\alpha<2$; see for example \cite{samorodnitsky:taqqu:1994}.  In fact, if $\mu$ is the measure satisfying \eqref{eq.defR},  then it holds that
\[
\mu\left([x,\infty)\right)\sim C_\alpha^{-1}x^{-\alpha}\,,x\to\infty\,,0<\alpha<2\,,
\]
where $C_\alpha$ is as in \eqref{eq.defcalpha}.  A comparison with \eqref{sec.ht.eq5} yields that
\[
h(y)\sim\left(C_\alpha y\right)^{-1/\alpha}\,,y\downarrow0\,,
\]
and hence
\[
\delta_u\sim C_\alpha^{1/\alpha}u^{-2/\alpha}\,,u\to\infty\,.
\]
Thus, the claim of Theorem \ref{sec.ht.main} boils down in this case to
\begin{equation}
\label{sec.ex.eq3}    u^{2/\alpha}\left(\tau^+-\tau^-\right) \Rightarrow C_\alpha^{1/\alpha}\left(\tau^+_*-\tau^-_*\right)\,,0<\alpha<2\,,
\end{equation}
conditionally on $[X_0>u]$ as $u\to\infty$,  where $\tau_*^\pm$ are the hitting times of $(Y_t)$ as in \eqref{eq.deftaup} and \eqref{eq.deftaum}, and the latter is as defined in \eqref{eq.defY}.  Since $(B(t):t\in\bbr)$ therein is a fractional Brownian motion with Hurst index $\alpha/2$, it holds that
\[
\left(C_\alpha^{1/2}B(t):t\in\bbr\right)\eid\left(B\left(C_\alpha^{1/\alpha}t\right):t\in\bbr\right)\,.
\]
Therefore,
\begin{align*}
\tau_*^+&\eid\inf\left\{t\ge0:\sqrt2B\left(C_\alpha^{1/\alpha}t\right)+T^*-\left(C_\alpha^{1/\alpha}t\right)^\alpha\le0\right\}\\
&=C_\alpha^{-1/\alpha}\inf\left\{s\ge0:\sqrt2B(s)+T^*-s^\alpha\le0\right\}\,,
\end{align*}
where $T^*$, as before, follows standard exponential independently of $(B(t))$, recalling that $R(0)=1$ in this example. A similar calculation holds for $\tau_*^-$ jointly with the above. Plugging these in \eqref{sec.ex.eq3} yields that for $0<\alpha<2$, conditionally on $[X_0>u]$ as $u\to\infty$,
\begin{equation}
\label{sec.ex.eq2} u^{2/\alpha}\left(\tau^+-\tau^-\right)\Rightarrow\tilde\tau^+-\tilde\tau^-\,,
\end{equation}
where 
\begin{align*}
\tilde\tau^+&=\inf\left\{t\ge0:\tilde Y_t\le0\right\}\,,\\
\tilde\tau^-&=\sup\left\{t<0:\tilde Y_t\le0\right\}\,,
\end{align*}
and
\begin{equation}\label{sec.ex.eq4}
\tilde Y_t=\sqrt2B(t)+T^*-|t|^\alpha\,,t\in\bbr\,.
\end{equation}

Combining \eqref{sec.ex.eq2} with \eqref{sec.ex.eq1} shows that the length of the excursion set above $u$ is of the order $u^{-2/\alpha}$ for all $\alpha\in(0,2]$.  That is,  the order of the length of the excursion set increases with increase in $\alpha$, which is not surprising because the paths of $(X_t)$ become smoother as $\alpha$ increases.  In fact, for $\alpha=2$, the process has analytic paths.

It is worth noting that when $\alpha=1$, $(X_t)$ is an Ornstein-Uhlenbeck process, and $(B(t))$ in \eqref{sec.ex.eq4} is a two-sided standard Brownian motion with $B(0)=0$. In this case, the independence of $(B(s):s\ge0)$ and $(B(s):s\le0)$ is heuristically consistent with the fact that $(X_t)$ is a Markov process.

\section*{Acknowledgment} The authors are grateful to Gennady Samorodnitsky for helpful discussions. Sukrit Chakraborty's research was supported by the NBHM postdoctoral fellowship.

\bibliographystyle{apalike}   

\end{document}